 \theoremstyle{theorem}
\newtheorem{lemma}{Lemma}
\newtheorem{theorem}{Theorem}
\newtheorem*{lfact}{Lemma A}
\theoremstyle{remark}
\newcommand{\DD}{\mathbb D}
\newcommand{\TT}{\mathbb T}
\newcommand{\vf}{\varphi}
\newcommand{\vt}{\vartheta}
\newcommand{\eps}{\epsilon}
\newcommand{\beq}{\begin{equation}}
 \newcommand{\eeq}{\end{equation}}
\begin{document}

\dedicatory{Dedicated to Nikolai K. Nikolski  on the occasion of his 70th birthday}

\title{Composition operators on model spaces}
 
\author{Yurii I. Lyubarskii}
\address{Department of Mathematical Sciences, Norwegian University of Science and Technology, NO-7491, Trondheim, Norway}
\email{yura@math.ntnu.no} 
\author{ Eugenia Malinnikova}
\email{eugenia@math.ntnu.no}

\subjclass[2010]{ Primary 47B33; Secondary 30H10, 30J05, 47A45.}
\keywords{Composition operator, model space, Nevanlinna counting function, Aleksandrov-Clark measure, one-component inner function}
%small changes
\begin{abstract}
Let $\vf:\DD\rightarrow\DD$ be a holomorphic function, $\vt:\DD\rightarrow\DD$ be an inner function and $K_\vt(\DD)=H^2(\DD)\ominus\vt H^2(\DD)$ be the corresponding model space. We study the composition operator $C_\vf$ on $K_\vt$ and give a necessary  and sufficient  condition for $C_\vf:K_\vt\rightarrow H^2$ to be compact. The condition involves an interplay between $\vt$ and the Nevanlinna counting function of $\vf$. For a one-component $\vt$ a characterization of  compact composition operators $C_\vf$  in terms of the Aleksandrov-Clark measures of $\vf$ and the spectrum of $\vt$ is also given.  
\end{abstract}
\thanks{The authors were partly supported   by the Research
Council of Norway grants 213638/F20 and 213440/BG.}
\maketitle

\section{Introduction}
 
Let $\DD$ be the unit disk in the complex plane.  Given a holomorphic function $\vf: \DD \to \DD$, denote by
 \[
 C_\vf: f \mapsto f\circ \vf
 \] 
 the  composition operator defined on holomorphic functions  in $\DD$.  This operator is bounded on the Hardy space 
 $H^2(\DD)$ (see e.g. \cite{S}).  One of the intensively studied  questions is when $C_\vf$ is a compact operator on various spaces of analytic functions. 
 We refer the reader to the monographs \cite{Shbook, CMc} for the history and basic results on composition operators.
 
 Loosely speaking $C_\vf$ is compact on $H^2(\DD)$ if $\vf(z)$ does not approach the unit circle $\TT$ too fast as $z\to \TT$.
 J.~Shapiro \cite{S}  quantified this idea by using the Nevanlinna counting function 
 \[
 N_\vf(w)= \sum_{\vf(z)=w} -\log|z|.
 \] 
 He proved in particular that $C_\vf$ is compact on $H^2(\DD)$ if and only if
 \beq
 \label{compactness}
 \lim_{|w|\rightarrow 1-}N_\vf(w)/(-\log|w|)=0.
 \eeq
 The basic tool in his argument  is the Stanton formula
 \beq
 \label{normcomposition}
 \|C_\vf f\|^2= 2 \int_\DD |f'(z)|^2 N_\vf(z) dA(z) + |f(\vf(0))|^2,
 \eeq
where $A$ is the normalized area measure. It is obtained from the identity
 \beq
 \label{stanton}
  \|f\|^2=2\int_\DD |f'(z)|^2\log \frac 1{|z|^2}dA(z)+|f(0)|^2, \ f\in H^2(\DD),
 \eeq
by substituting $f\circ\vf$ in place of $f$. 

 Another way to describe the compactness property of $C_\vf$ is related to the Aleksandrov-Clark measures of $\vf$. These are the positive measures 
 $\mu_\alpha$ on $\TT$ defined by the relation 
 \[ 
 \Re \frac {\alpha+ \vf(z)}{\alpha-\vf(z)} 
 = \int_\TT P_z d\mu_\alpha,
 \]
where $P_z$ is the Poisson kernel, $\alpha\in\TT$. We refer the reader to the surveys \cite{PS, Saks} for more details.  In \cite{Sar} D. Sarason showed how $C_\vf$ can be treated as an integral operator on the spaces $L^1(\TT)$ and $M(\TT)$ and proved that $C_\vf$ is compact on these spaces if and only if each $\mu_\alpha$ is absolutely continuous. Due to  \cite{SSa},  it is further equivalent to $C_\vf$ being compact on $H^2(\DD)$  as well as on other Hardy spaces $H^p(\DD)$, see also \cite{CM}.

In this article  we study the  compactness of the operator    $C_\vf : K_\vt \to H^2(\DD)$, where $\vt$ is an inner function in $\DD$ and $K_\vt= H^2(\DD)\ominus \vt H^2(\DD)$ is the corresponding model space.  Consider the canonical factorization of $\vt$     
 \[
%\label{factorization}
\vt(z)=B_\Lambda(z)  \exp \left ( \int_\TT \frac {\xi+z}{\xi-z} d\omega (\xi)  \right ),
\]
where $\Lambda$ is the zero set of $\vt$, $B_\Lambda$ is the corresponding Blachke product, and $\omega$
is a singular measure on $\TT$.  Functions in $K_\vt$ admit analytic continuation through $\TT \setminus  \Sigma(\vt)$, 
where 
\[
%\label{spectrum}
\Sigma(\vt) =  \left (\TT\cap {\rm Clos}(\Lambda) \right ) \cup {\rm supp}(\omega)
\]
is the {\em spectrum}  of $\vt$ (see \cite{Nik}, Lecture 3). 
Therefore the compactness property of $C_\vf$ does not suffer as the values of $\vf$ approach  points in $\TT\setminus \Sigma(\vt)$.  We quantify this idea below and give a condition that is necessary and sufficient  for the compactness of  $C_\vf:K_\vt\to H^2(\DD)$. 
%add sufficient, take away next few lines

%It turns out that the conditions is also sufficient under the additional assumption that
%$\vt$ is a one-component inner function, i.e. the set  $\{z\in \DD: |\vt(z)|<r\}$ is connected for some $r\in (0,1)$. Such inner functions
% were introduced in \cite{C} in connection with embedding theorems, see also \cite{A} for a number of equivalent characterizations of one-component inner functions. The machinery developed for the study of the composition operators on $H^2(\DD)$ can be adjusted to the spaces  $K_\vt$ with one-component $\vt$. 

\subsection*{ Acknowledgments} This work was started when the authors visited the Mathematics Department of the University of California, Berkley. It is our pleasure to thank the Department for the hospitality 
and Donald Sarason for useful discussions.

The authors will also thank Anton Baranov for his comments on the preliminary version of this note and for showing us the inequality in \cite{C2} that is crucial for the proof of Theorem 1.
%added

  %%%%%%%%%%%%%%%%%%%%%%%%%%

\section{Nevanlinna counting function}
In this section we give a counterpart of the condition (\ref{compactness}) for the operator $C_\vf:K_\vt\rightarrow H^2(\DD)$. The proof follows the ideas of \cite{S}. 
The main new ingredients are estimates for the reproducing kernels and its derivatives given in Lemma \ref{l1}  below.

Let $\kappa_w$ be  the reproducing kernel for $K_\vt$,
$$
\kappa_w(\zeta)= \frac{1-{\overline{ \vt(w)} }    \vt(\zeta)} {1-\bar w   \zeta}, \quad  
\|\kappa_w\|^2 = \frac{1-|\vt(w)|^2}{1-|w|^2},
$$
and let $\tilde{\kappa}_w$ be its normalized version
 \[
%\label{rkernel}
\tilde{\kappa}_w(\zeta)=  \left ( \frac{1-|w|^2} {1-|\vt(w)|^2} \right ) ^{1/2}   
  \frac{1-{\overline{ \vt(w)}} \vt(\zeta)}{1-\bar w \zeta}.
\]
By
 \beq
 \label{kernel}
 k_w(\zeta)= \frac 1{1-\bar{w}\zeta}, \quad  \tilde{k}_w(\zeta)= \frac{ (1-|w|^2)^{1/2}}{1-\bar{w}\zeta} 
 \eeq
 we denote the reproducing  kernel for $H^2$ and its normalized version.

\begin{lemma} \label{l1}
Let $\{w_n\}\subset \DD$, $|w_n|\to 1$ be such that
\beq
\label{thetaestimate}
|\vt(w_n)|<a,
\eeq
 for some $a\in (0,1)$. Then\\
 (i) $\tilde{\kappa}_{w_n}\xrightarrow{w*} 0$    as $n\to \infty$; \\
(ii) there  exist  $\eps >0$ ,  $c>0$ and $n_0$ such that  
 \beq
 \label{estimatefrombelow}
| \kappa'_{w_n}(\zeta)|>\frac c {(1-|w_n|^2)^2}, \quad  \zeta \in D_{\eps}(w_n)
\eeq
holds for any $n>n_0$,  where $D_\eps(w)= \{\zeta; |\zeta-w|<\eps |1-\bar{\zeta}w|\}$ is a hyperbolic disk with center at $w$.
\end{lemma}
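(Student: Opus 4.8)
The plan is to handle the two parts separately: (i) by a soft density argument, and (ii) by transplanting the whole computation to the origin via the disk automorphism $\sigma_{w_n}(z)=(w_n-z)/(1-\bar w_n z)$.

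For (i), since $\|\tilde\kappa_{w_n}\|=1$ the sequence is norm-bounded, so it suffices to test weak-star convergence against the dense span of the reproducing kernels $\kappa_\lambda$, $\lambda\in\DD$. Writing $c_n=\bigl((1-|w_n|^2)/(1-|\vt(w_n)|^2)\bigr)^{1/2}$, the reproducing property gives $\langle \tilde\kappa_{w_n},\kappa_\lambda\rangle=c_n\kappa_{w_n}(\lambda)$. The hypothesis $|\vt(w_n)|<a$ forces $c_n\le\bigl((1-|w_n|^2)/(1-a^2)\bigr)^{1/2}\to0$, while $\kappa_{w_n}(\lambda)=(1-\overline{\vt(w_n)}\vt(\lambda))/(1-\bar w_n\lambda)$ stays bounded for fixed $\lambda$ (numerator $\le 2$, denominator $\ge 1-|\lambda|>0$). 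Hence each test inner product tends to $0$, and weak-star convergence follows.

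For (ii), set $p_n=\vt(w_n)$ and $\psi_n=\vt\circ\sigma_{w_n}$, a self-map of $\DD$ with $\psi_n(0)=p_n$; note $\sigma_{w_n}(0)=w_n$ and $\sigma_{w_n}(\{|z|<\eps\})=D_\eps(w_n)$. Using $1-\bar w_n\sigma_{w_n}(z)=(1-|w_n|^2)/(1-\bar w_n z)$ together with the chain rule, I would derive the clean identity $(1-|w_n|^2)^2\kappa'_{w_n}(\sigma_{w_n}(z))=(1-\bar w_n z)^2 B_n(z)$, where $B_n(z)=\bar p_n\psi_n'(z)(1-\bar w_n z)+\bar w_n(1-\bar p_n\psi_n(z))$. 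Since $|1-\bar w_n z|\ge 1-\eps$ on $|z|<\eps$, the claim reduces to a uniform lower bound $|B_n(z)|\ge c_0$ there. At the center, $B_n(0)=\bar p_n\psi_n'(0)+\bar w_n(1-|p_n|^2)$, and the Schwarz-Pick inequality for $\psi_n$ gives $|\psi_n'(0)|\le 1-|p_n|^2$, whence $|B_n(0)|\ge(1-|p_n|^2)(|w_n|-|p_n|)\ge(1-a^2)(|w_n|-a)$, which exceeds $2c_0:=(1-a^2)(1-a)/2$ once $|w_n|>(1+a)/2$. To propagate this from $z=0$ to $|z|<\eps$, I would bound the variation $|B_n(z)-B_n(0)|$: because every $\psi_n$ maps into $\DD$, Cauchy and Schwarz-Pick estimates bound $\psi_n'$ and $\psi_n''$ on $|z|\le1/2$ by absolute constants, giving $|B_n(z)-B_n(0)|\le C\eps$ with $C$ independent of $n$. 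Choosing $\eps\le1/2$ so small that $C\eps<c_0$, and then $n$ large, yields $|B_n(z)|\ge c_0$ and therefore $(1-|w_n|^2)^2|\kappa'_{w_n}(\zeta)|\ge(1-\eps)^2c_0$ for all $\zeta\in D_\eps(w_n)$, which is the assertion.

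The main obstacle is that $\kappa'_{w_n}$ contains a term proportional to $\vt'$, which is a priori unbounded as $|w_n|\to1$ and could in principle cancel the dominant term, so a naive triangle-inequality estimate on $D_\eps(w_n)$ fails for $a$ close to $1$. The change of variables is precisely what tames this: it converts the dangerous quantity $\vt'(w_n)(1-|w_n|^2)$ into $\psi_n'(0)$, controlled by Schwarz-Pick, and — the real point — it makes all the required derivative estimates uniform in $n$, since the transported family $\{\psi_n\}$ is uniformly bounded. Upgrading the bound at the single point $w_n$ to a bound that is uniform over the whole pseudohyperbolic disk $D_\eps(w_n)$, uniformly in $n$, is the heart of the argument.
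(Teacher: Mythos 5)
Your proposal is correct, and for part (ii) it takes a genuinely different route from the paper. The paper works entirely in the original variable: it first uses Schwarz--Pick to get $|\vt|<b<1$ on the hyperbolic disks $D_\eps(w_n)$ (their estimate \eqref{nice}), then proves the \emph{diagonal} bound $|\kappa'_\zeta(\zeta)|\gtrsim (1-|\zeta|^2)^{-2}$ by splitting $\kappa'_\zeta(\zeta)$ into a term involving $\vt'$ and a dominant term, and finally transfers this off the diagonal by a mean-value perturbation of $g(w,\zeta)=\overline{\kappa'_w(\zeta)}$, using the crude bound $|g'|\lesssim(1-|w_n|)^{-3}$ and shrinking the disk from $D_\eps$ to $D_{\eps'}$. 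Your renormalization via $\sigma_{w_n}$ replaces these inequalities by the exact identity $(1-|w_n|^2)^2\,\kappa'_{w_n}(\sigma_{w_n}(z))=(1-\bar w_n z)^2 B_n(z)$ (which I checked; it is right), after which the center bound is pure Schwarz--Pick at the origin and the perturbation bound is a Cauchy estimate for the uniformly bounded family $\psi_n=\vt\circ\sigma_{w_n}$. This buys you two things the paper's argument has to work for: uniformity in $n$ is automatic (no $(1-|w_n|)^{-3}$ bookkeeping, no two-radius argument), and you never need the intermediate statement $|\vt|<b$ on all of $D_\eps(w_n)$ --- only $|\vt(w_n)|<a$ at the center enters. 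The paper's approach, on the other hand, yields the slightly stronger intermediate fact \eqref{central} (a diagonal bound valid at every point of the hyperbolic disks), which is of independent use. For part (i) the two arguments are essentially the same soft fact in different clothing: the paper reduces to weak convergence of the normalized Hardy kernels $\tilde k_{w_n}$, while you test against the dense span of the kernels $\kappa_\lambda$ of $K_\vt$; both hinge on the normalizing factor $\bigl((1-|w_n|^2)/(1-|\vt(w_n)|^2)\bigr)^{1/2}\to 0$, which is where the hypothesis \eqref{thetaestimate} is used.
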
 
 
 \begin{proof}
 (i) It suffices to show that 
\[
  \frac{(1-|w_n|^2)^{1/2} }{ 1-\bar {w}_n \zeta}      (1-\overline{ \vt(w_n)} \vt(\zeta)) \xrightarrow{w*} 0 \ {\mbox{ in} }\  L^2(\TT)
  \ {\mbox{as}}\  \ |w_n|\to 1 .   
\]  
 This in turn follows from the known fact that the normalized reproducing kernels $\tilde{k}_{w_n}$ for the Hardy space $H^2(\DD)$ tend weakly to 0 as $|w_n|\rightarrow 1$, see e.g. \cite{S}.
 %\[
 %\tilde{k}_{w_n}(z):=   \frac{(1-|w_n|^2)^{1/2} }{ 1-\bar {w}_n \zeta}  \xrightarrow{w*} \ {\rm in} \ L^2(\TT)
   %\ {\rm as} \ |w_n|\to 1.\]  

 (ii)  We start with  the following well-known estimate
\beq
\label{wellknown} 
|\vt'(\zeta)| \leq \frac {1-|\vt(\zeta)|^2}{1-|\zeta|^2}, \quad \zeta \in \DD.
\eeq
Together with (\ref{thetaestimate}) it readily yields 
 \beq
\label{nice}
|\vt(\zeta)|< b,\quad \zeta \in\cup_n D_\eps(w_n),
\eeq
for some $b<1$ and $\eps >0$.

We claim now that  for sufficiently large $n_0$
 \beq
 \label{central}
 |\kappa'_{\zeta}(\zeta)|> \frac {{\rm const}} {(1-|\zeta|^2)^2}, \quad \zeta\in\cup_{n>n_0} D_\eps(w_n).
 \eeq
 Indeed,  
\[
\kappa_\zeta'(\zeta)= -\frac{\vt'(\zeta)\overline{\vt(\zeta)}}{1-|\zeta|^2} + \bar{\zeta} \frac {1-|\vt(\zeta)|^2}{(1-|\zeta|^2)^2}= A_1+A_2.
\]
It follows from \eqref{nice}  that $|A_2|> c (1-|\zeta|^2)^{-2}$ for some $c>0$, and in order to prove \eqref{central}
it suffices to show that 
\[  
|A_1|<q|A_2| \quad   \mbox{for some} \ q\in (0,1),
\]
when  $\zeta\in\cup_{n>n_0}D_\eps(w_n)$. The relation \eqref{wellknown}  yields
 \[
|A_1|\le |\vt(\zeta)|  \frac {1-|\vt(\zeta)|^2}{(1-|\zeta|^2)^2}<\frac{b}{|\zeta|}|A_2|,\quad  \zeta\in\cup_n D_\eps(w_n).
\]
Since $b<1$ and  $\inf\{|\zeta|: \zeta\in\cup_{n>m} D_{\eps(w_n)}\}\to 1$ as $m\to \infty$, the required estimate follows.

The inequality \eqref{central} proves (\ref{estimatefrombelow}) for the special case $\zeta=w_n$. In order to complete the proof consider the function
\[
%\label{g}
g(w,\zeta) = \overline{\kappa'_w(\zeta)}  = - \frac{\overline{\vt'(\zeta)}\vt(w)}{1-\bar{\zeta}w} + w \frac{1-\overline{\vt(\zeta)}\vt(w)}{(1-\bar{\zeta} w)^2}.
\]
  We have 
$|g(w_n,\zeta)|= |\kappa'_{w_n}(\zeta)|.$
On the other hand 
\beq
\label{lagrange}
|g(w_n,\zeta)-g(\zeta,\zeta)|<|g'(\tilde{w}, \zeta)||\zeta-w_n|,
\eeq
for some point $\tilde{w}\in [\zeta,w_n]$, where the derivative is taken with respect to the first variable. A straightforward estimate shows
\[
|g'(\tilde{w}, \zeta)| < \frac {\mbox{const}}{(1-|w_n|)^3}, \ \tilde{w}, \zeta \in D_\eps(w_n),
\]
the constant being independent of $n$. Now, given any $\eta >0$ we can choose $\eps'<\eps$ such that 
  the right-hand side  in \eqref{lagrange} does not exceed  $\eta (1-|\zeta|^2)^{-2}$ when $\zeta\in D_{\eps'}(w_n)$.
Taking $\eta$ sufficiently small we obtain (\ref{estimatefrombelow}).
\end{proof}

In what follows we  assume for simplicity that $\vf(0)=0$.
\begin{theorem}\label{th:1} The following statements are equivalent\\
(C) $ C_\vf:K_\vt \to H^2$ is a compact operator.\\
(N) The Nevanlinna counting function of $\vf$ satisfies 
\beq
 \label{basic}
   N_\vf(w) \frac {1-|\vt(w)|^2}{1- |w|^2}  \to 0 \ {\rm as}  \  |w|\nearrow 1.     
\eeq
 \end{theorem}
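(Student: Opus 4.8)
The plan is to prove the two implications separately, using the Stanton formula \eqref{normcomposition} (whose additive term is $|f(0)|^2$ since $\vf(0)=0$) as the bridge between $\|C_\vf f\|$ and $N_\vf$, and Lemma \ref{l1} to manufacture test functions. For (N)$\Rightarrow$(C) I would use that a bounded operator between Hilbert spaces is compact iff it carries weak-null sequences to norm-null ones (boundedness of $C_\vf : K_\vt \to H^2$ being inherited from $H^2$). So let $f_n\in K_\vt$, $\|f_n\|\le1$, $f_n\to0$ weakly; by \eqref{normcomposition},
\[
\|C_\vf f_n\|^2=2\int_\DD|f_n'(z)|^2N_\vf(z)\,dA(z)+|f_n(0)|^2 ,
\]
and $|f_n(0)|\to0$. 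Split the integral at $|z|=r$. On $\{|z|\le r\}$ the derivatives $f_n'$ tend to $0$ uniformly (weak convergence plus local boundedness) while $N_\vf\in L^1(\DD)$ (apply \eqref{normcomposition} to $f(z)=z$), so this part vanishes as $n\to\infty$ for each fixed $r$. On $\{r<|z|<1\}$ I would write $N_\vf=\big(N_\vf\frac{1-|\vt|^2}{1-|z|^2}\big)\frac{1-|z|^2}{1-|\vt|^2}$, bounding the first factor by $\eps(r):=\sup_{|z|>r}N_\vf\frac{1-|\vt|^2}{1-|z|^2}$, which tends to $0$ by \eqref{basic}, and controlling the second factor by the embedding inequality
\[
\int_\DD|f'(z)|^2\,\frac{1-|z|^2}{1-|\vt(z)|^2}\,dA(z)\le C\|f\|^2 ,\qquad f\in K_\vt .
\]
Thus the boundary part is $\le C\eps(r)$, and letting $n\to\infty$ and then $r\to1$ yields $\|C_\vf f_n\|\to0$.

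For (C)$\Rightarrow$(N) I would argue contrapositively. If \eqref{basic} fails, choose $\delta>0$ and $w_n$ with $|w_n|\to1$ and $N_\vf(w_n)\frac{1-|\vt(w_n)|^2}{1-|w_n|^2}\ge\delta$. The first point to check is that such $w_n$ automatically obey \eqref{thetaestimate}: by Littlewood's inequality $N_\vf(w)\le\log(1/|w|)$ (valid because $\vf(0)=0$) the ratio $N_\vf(w_n)/(1-|w_n|^2)$ is bounded, so $\delta\le (\text{const})\,(1-|\vt(w_n)|^2)$ forces $|\vt(w_n)|\le a<1$ and Lemma \ref{l1} becomes applicable. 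I would then test compactness on the normalized kernels $\tilde{\kappa}_{w_n}$, which are weak-null by Lemma \ref{l1}(i); compactness would give $\|C_\vf\tilde{\kappa}_{w_n}\|\to0$. On the other hand, restricting \eqref{normcomposition} to a Euclidean disk $B_n\subset D_\eps(w_n)$ centred at $w_n$ of radius comparable to $1-|w_n|^2$, using $\tilde{\kappa}'_{w_n}=\kappa'_{w_n}/\|\kappa_{w_n}\|$ together with \eqref{estimatefrombelow} and $\|\kappa_{w_n}\|^2=\frac{1-|\vt(w_n)|^2}{1-|w_n|^2}$, and invoking the sub-mean value property of $N_\vf$ (so that $\int_{B_n}N_\vf\,dA\gtrsim N_\vf(w_n)(1-|w_n|^2)^2$), I would obtain
\[
\|C_\vf\tilde{\kappa}_{w_n}\|^2\gtrsim\frac{N_\vf(w_n)}{(1-|w_n|^2)(1-|\vt(w_n)|^2)}\ge N_\vf(w_n)\,\frac{1-|\vt(w_n)|^2}{1-|w_n|^2}\ge\delta ,
\]
the middle step using $\tfrac{1}{1-|\vt(w_n)|^2}\ge1-|\vt(w_n)|^2$. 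This contradicts compactness and proves the implication.

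The two analytic facts I lean on here -- Littlewood's inequality and the sub-mean value property of $N_\vf$ -- are classical (see \cite{S,Shbook}), and Lemma \ref{l1} is already in hand, so the genuine obstacle is the embedding inequality displayed in the first paragraph. This is precisely the point where the interplay of $\vt$ and $\vf$ is turned into a bound uniform over $f\in K_\vt$, and it is the inequality of Cohn \cite{C2} referred to in the acknowledgments. I note that its constant cannot be taken independent of $\vt$ -- already a single Blaschke factor at $\lambda$ forces $C\to\infty$ as $|\lambda|\to1$ -- but only a fixed finite $C=C(\vt)$ is needed, since $\vt$ is fixed throughout Theorem \ref{th:1}; the little-oh hypothesis \eqref{basic} then supplies the decay as $r\to1$.
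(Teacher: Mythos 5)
Your (C)$\Rightarrow$(N) half is correct and coincides with the paper's: Littlewood's inequality forces $|\vt(w_n)|\le a<1$, Lemma~\ref{l1}(i) makes $\tilde{\kappa}_{w_n}$ a weak-null test sequence, and Lemma~\ref{l1}(ii) plus the sub-mean value property of $N_\vf$ reproduce the lower bound \eqref{finalestimate}. The gap is in (N)$\Rightarrow$(C), and it is exactly the inequality you flagged as ``the genuine obstacle'': the bound
\begin{equation*}
\int_\DD|f'(z)|^2\,\frac{1-|z|^2}{1-|\vt(z)|^2}\,dA(z)\le C(\vt)\,\|f\|^2,\qquad f\in K_\vt ,
\end{equation*}
is \emph{false} for general inner $\vt$ --- not merely non-uniform in $\vt$, as your single-Blaschke-factor remark shows, but false with any finite constant for a suitable fixed infinite Blaschke product. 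Sketch: let $\vt=B_\Lambda$ with zeros $\lambda_n=(1-\eps_n)e^{i\phi_n}$, the $\phi_n$ distinct and the $\eps_n$ chosen recursively, super-exponentially small, and test on $f_n=\tilde{k}_{\lambda_n}\in K_\vt$, $\|f_n\|=1$. Writing $b_{\lambda}$ for the Blaschke factor at $\lambda$, one has $\frac{1-|z|^2}{1-|b_{\lambda_n}(z)|^2}=\frac{|1-\bar{\lambda}_n z|^2}{1-|\lambda_n|^2}$, and each dyadic annulus $\{|z-\lambda_n|\asymp 2^j\eps_n\}$, $1\le j\le \log(1/\eps_n)$, contributes $\asymp 1$ to $\int|f_n'|^2\frac{1-|z|^2}{1-|b_{\lambda_n}|^2}dA$ (this is your $\log\frac{1}{1-|\lambda|}$ computation). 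On the bulk of the $j$-th annulus $1-|b_{\lambda_n}(z)|^2\asymp 2^{-j}$, while sparseness gives $\prod_{m\ne n}|b_{\lambda_m}(z)|^2\ge 1-2^{-J_n}$ on the first $J_n$ annuli, with $J_n\to\infty$; hence $1-|\vt(z)|^2\le 2\,(1-|b_{\lambda_n}(z)|^2)$ there, and $\int|f_n'|^2\frac{1-|z|^2}{1-|\vt(z)|^2}dA\gtrsim J_n\to\infty$. So no $C(\vt)$ exists, and your closing argument (``only a fixed finite $C=C(\vt)$ is needed'') rests on a false premise. This is also a misattribution: Cohn's norm equivalence with the weight $\frac{1-|z|}{1-|\vt(z)|}$ is a \emph{one-component} theorem (the paper uses it in that form only in the concluding remarks); the inequality crucial for Theorem~\ref{th:1} is \eqref{cohn}, where the denominator carries an exponent $p\in(0,1)$, and in that form it holds for every inner function.

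The repair is exactly the paper's route, and your scheme then goes through. Replace your weight by $\frac{1-|z|}{(1-|\vt(z)|)^p}$ from \eqref{cohn}, and add the self-improvement step the paper records: since $N_\vf(w)/(1-|w|^2)$ is bounded (Littlewood) and $1-|\vt(w)|^2\le 1$, condition \eqref{basic} is equivalent to $N_\vf(w)/(1-|w|^2)\to 0$ inside each region $\{|\vt(w)|<a\}$, $a<1$, and therefore implies $N_\vf(w)\frac{(1-|\vt(w)|)^p}{1-|w|}\to 0$ for \emph{every} $p>0$. With these two substitutions your factorization of $N_\vf$ on the outer annulus works verbatim, and your use of the weak-null-sequence criterion for compactness instead of the paper's finite-rank approximation by $C_\vf(I-\Pi^{(n)})$ is an immaterial cosmetic difference.
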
 

\begin{proof}[Proof (N)  $\Rightarrow$   (C)] 
Since $N_\vf(w)(1-|(w)|^2)^{-1}$ and $1-|\vt(w)|^2$ are bounded, the condition (N) means that for any $a<1$ 
\[
\lim_{|\vt(w)|<a, |w|\rightarrow 1-} N_\vf(w)(1-|w|^2)^{-1}=0.\]
In particular, for any $p>0$
\[
   N_\vf(w) \frac {(1-|\vt(w)|)^p}{1- |w|}  \to 0 \ {\rm as}  \  |w|\nearrow 1.     
\]

 We use the following  inequality, see \cite[page 187]{C2} and \cite{ACS},
 \begin{equation}
\label{cohn}
 \|f\|^2  \ge C_p  \int_\DD |f'(z)|^2   \frac{1-|z|}{(1-|\vt(z)|)^p}dA(z)+ |f(0)|^2 ,\quad  f \in K_\vt,
 \end{equation}
which is valid for some $p \in (0,1)$.

In our setting this formula replaces \eqref{stanton}. We follow the argument of J.~Shapiro; 
a similar argument for compactness of the composition operator in some weighted spaces of analytic functions can be found in \cite[Ch. 3.2]{CMc}.

Let  $K^{(n)}_\vt=\{ f\in K_\vt; f  \ \mbox{has  zero of order $n$ at the origin} \}$,  and let  
$\Pi^{(n)}:K_\vt \to K^{(n)}_\vt$  be the corresponding orthogonal projection.
We will prove that 
\[
%\label{one}
\|C_\vf \Pi^{(n)}\|_{K_\vt \to H^2} \to 0, \quad n\to \infty.
\]
Thus $C_\vf$ is compact as it can be approximated by the finite-rank operators $C_\vf (I-\Pi^{(n)}) $.

Indeed, given $f\in K_\vt$, $\|f\|=1$, denote $g_n= \Pi^{(n)}f$. We have $\|g_n\|\leq 1$  and, for each $R<1$, $\eps >0$ we can choose 
$ n(\eps,R)$  independent of $f$ and such that  
\[
|g_n(w)|< \eps,  \  |g'_n(w)| < \eps,  \ \mbox{for all} \ n>n(\eps, R), \ \mbox{and} \ |w|<R.   
\]
It follows from \eqref{cohn} that 
\[
\int_\DD |g_n'(z)|^2   \frac{1-|z|}{(1-|\vt(z)|)^p}dA(z) < C,
\]
with $C$ independent  of $f$, $n$. Next, by (\ref{normcomposition}) we have
\begin{multline*}
\|C_\vf  \Pi^{(n)}f\|^2  =\int_\DD |g_n'(z)|^2   N_\vf(z) dA(z) \leq \int_{|z|<R} + \int_{R<|z|<1} \leq \\
\max_{|z|<R}\left \{ |g'_n(z)|^2 \right \} \int_{|z|<R}  N_\vf(z) dA(z) + \\
                      \max_{R<|z|<1} \left \{ N_\vf(z) \frac{(1-|\vt(z)|)^p}{1-|z|}  \right \} 
                                 \underbrace {   \int_{R<|z|<1}|g'_n(z)|^2 \frac{1-|z|}{(1-|\vt(z)|)^p} dA(z) }_
                                           {\mbox {\tiny{this is less than }} C}.
 \end{multline*}

Choosing first $R$ such that the second summand is small, and then $n$ large enough  to provide smallness of the 
first summand we can make the whole expression arbitrary small for all $f\in K_\vt$, $\|f\|=1$.  
\end{proof}

\begin{proof}[ Proof (C)  $\Rightarrow$  (N)] Assume that $C_\vf$ is compact but (\ref{basic}) does not hold. Then there exists a sequence $\{w_n\} \subset \DD$, $|w_n|\to 1$,  satisfying 
 \beq
\label{not}
N_\vf(w_n) \frac{1-|\vt(w_n)|^2}{1-|w_n|^2}>c>0.
\eeq
 By  the Littlewood subordination principle, which implies that $N_\vf(w)\leq \log\frac{1}{|w|}$, there exists $a<1$ such that (\ref{thetaestimate}) holds. Applying Lemma \ref{l1} (i) and the compactness of $C_\vf$, we get 
 $\|C_\vf\tilde{\kappa}_{w_n}\|^2 \to 0 \  \mbox{as}  \ n\to \infty.$
On the other hand, (\ref{thetaestimate}), Lemma \ref{l1} (ii) and  the subharmonicity inequality for $N_\vf$  (see \cite{S}) imply  
\begin{multline}
\label{finalestimate}
\|C_\vf\tilde{\kappa}_{w_n}\|^2 \ge \int_\DD |\tilde{\kappa}_{w_n}'(\zeta)|^2 N_\vf(\zeta) dA(\zeta)\ge\\
c_1\int_{\DD}|\kappa_{w_n}'(\zeta)|^2(1-|w_n|^2) N_\vf(\zeta)dA(\zeta)\ge
               \frac {c_2} {(1-|w_n|^2)^3}
                                       \int_{D_\eps(w_n)}  N_\vf(\zeta) dA(\zeta) \ge\\ \frac{c_\eps N_\vf(w_n)}{1-|w_n|^2}.
 \end{multline}                      
We combine the last estimate with (\ref{not}) to get a contradiction.
 \end{proof}

%%%%%%%%%%%%%%%%%%

\section{Aleksandrov-Clark measures}

For $\alpha \in \TT$ let as before $\mu_\alpha$ be the Aleksandrov-Clark measure of $\vf$ corresponding to $\alpha$  and let
\[
d\mu_\alpha= h_\alpha dm + d\sigma_\alpha
\]
be its decomposition into absolutely continuous and singular parts, where $m$ is the normalized Lebesgue measure on $\TT$.
Then
\[
%\label{Clarkabs}
h_\alpha(\zeta)=\frac{1-|\vf(\zeta)|^2}{|\alpha-\vf(\zeta)|^2}
\]
for almost every $\zeta$ on $\TT$. As above, we assume for simplicity that $\vf(0)=0$, then $\|\mu_a\|=1$.

We give a condition in terms of the Aleksandrov-Clark measures that is sufficient for the compactness, it is also necessary if
$\vt$ is a one-component inner function, i.e. the set  $\{z\in \DD: |\vt(z)|<r\}$ is connected for some $r\in (0,1)$.The  one-component inner functions were introduced by W.~S.~ Cohn  in \cite{C}, 
see also \cite{A} for a number of equivalent characterizations of one-component inner functions.

\begin{theorem} Let  $\vt$ be a  one-component inner function. The following statements are equivalent\\
(C) $ C_\vf:K_\vt \to H^2$ is a compact operator.\\
 (S)  $\sigma_\alpha=0$ for all $\alpha \in \Sigma(\vt)$.\\
Moreover, the implication (S) $\Rightarrow$  (C) holds for any inner function $\vt$.  
\end{theorem}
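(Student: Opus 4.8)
The plan is to derive Theorem 2 from the Nevanlinna characterization of Theorem~\ref{th:1}, by translating condition (N) into a statement about the Clark measures $\mu_\alpha$ for $\alpha\in\Sigma(\vt)$. The bridge between the two pictures is a pointwise relation between the boundary growth of $N_\vf$ at a point $\alpha\in\TT$ and the singular mass $\|\sigma_\alpha\|$. To obtain it I would start from the Riesz--Jensen identity for the subharmonic function $\log|b_w\circ\vf|$, where $b_w(\zeta)=(w-\zeta)/(1-\bar w\zeta)$; since $\vf(0)=0$ it reads
\[
N_\vf(w)=-\log|w|-\int_\TT\log\frac1{|b_w(\vf(\xi))|}\,dm(\xi),\qquad w\in\DD .
\]
Combining this with the elementary identity $1-|b_w(\zeta)|^2=(1-|w|^2)(1-|\zeta|^2)/|1-\bar w\zeta|^2$ and with $-\log|w|=\tfrac12(1-|w|^2)+o(1-|w|^2)$, the nonnegative integrand, after division by $1-|w|^2$, converges pointwise as $w\to\alpha$ to $\tfrac12 P_\alpha(\vf(\xi))$, where $P_\alpha(\zeta)=(1-|\zeta|^2)/|\alpha-\zeta|^2$; crucially this limit does not depend on the manner in which $w$ approaches $\alpha$. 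Since $\int_\TT P_\alpha(\vf)\,dm=\int_\TT h_\alpha\,dm=1-\|\sigma_\alpha\|$, Fatou's lemma yields the forward half of the bridge,
\[
\limsup_{w\to\alpha}\frac{N_\vf(w)}{1-|w|}\le\|\sigma_\alpha\|,
\]
valid for an unrestricted approach $w\to\alpha$.

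This forward inequality alone gives the universal implication (S) $\Rightarrow$ (C). By Theorem~\ref{th:1} it suffices to verify (N), so suppose (N) fails: there are $w_n$ with $|w_n|\to1$ and $N_\vf(w_n)(1-|\vt(w_n)|^2)/(1-|w_n|^2)\ge c>0$. Since $N_\vf(w)\le-\log|w|$ forces $N_\vf(w_n)/(1-|w_n|^2)\le1$ for large $n$, the lower bound compels $1-|\vt(w_n)|^2\ge c$, i.e.\ $|\vt(w_n)|<a$ for a fixed $a<1$, and also $N_\vf(w_n)/(1-|w_n|)\ge c$. As $\vt$ has modulus $1$ on $\TT\setminus\Sigma(\vt)$, every cluster point of $\{w_n\}$ lies in $\Sigma(\vt)$, so after passing to a subsequence $w_n\to\alpha_0\in\Sigma(\vt)$. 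The forward bridge then gives $\|\sigma_{\alpha_0}\|\ge\limsup_{w\to\alpha_0}N_\vf(w)/(1-|w|)\ge c>0$, contradicting (S). This argument uses nothing about $\vt$ beyond the description of $\Sigma(\vt)$, so it holds for an arbitrary inner function.

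For the converse (C) $\Rightarrow$ (S), assumed for one-component $\vt$, I would need the reverse half of the bridge: for a suitable (say nontangential) approach,
\[
\liminf_{w\to\alpha}\frac{N_\vf(w)}{1-|w|}\ge\|\sigma_\alpha\|,
\]
so that the two estimates combine into $\lim N_\vf(w)/(1-|w|)=\|\sigma_\alpha\|$. Granting this, I argue by contraposition: if $\sigma_{\alpha_0}\ne0$ for some $\alpha_0\in\Sigma(\vt)$, the reverse bridge produces a sequence $w_n\to\alpha_0$ with $N_\vf(w_n)\ge c(1-|w_n|)$. The one-component hypothesis enters precisely here, through the geometry of level sets: the connected set $\{|\vt|<a\}$ reaches every point of its boundary spectrum nontangentially, so the $w_n$ may be taken inside $\{|\vt|<a\}$. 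For such points $(1-|\vt(w_n)|^2)/(1-|w_n|^2)\ge(1-a^2)/(1-|w_n|^2)$, whence $N_\vf(w_n)(1-|\vt(w_n)|^2)/(1-|w_n|^2)\ge c(1-a^2)/2>0$ and (N) fails; by Theorem~\ref{th:1}, $C_\vf$ is not compact.

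The main obstacle is the reverse bridge, that is, showing the singular mass of $\mu_\alpha$ is not merely an upper bound but is actually attained by $N_\vf(w)/(1-|w|)$. The difficulty is exactly the interchange of limit and integral at the place where Fatou loses information: the values $\vf(\xi)$ clustering at $\alpha$, which carry the singular part, must be shown to feed their full mass into the Jensen defect $N_\vf$ rather than into the boundary integral. I expect to control this through a uniform integrability estimate for $\log|b_w(\vf(\xi))|^{-1}/(1-|w|^2)$ along nontangential $w\to\alpha$, together with the Julia--Carath\'eodory description of $\sigma_\alpha$. The one-component property is then invoked separately, and only here, to guarantee that a spectral point $\alpha_0$ is accessible from within $\{|\vt|<a\}$, so that the large values of $N_\vf$ occur where the weight $(1-|\vt(w)|^2)/(1-|w|^2)$ is also large.
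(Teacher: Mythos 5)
Your treatment of the implication (S) $\Rightarrow$ (C) is essentially correct and runs parallel to the paper's: both arguments suppose (N) fails, extract $w_n\to\alpha_0\in\Sigma(\vt)$ with $|\vt(w_n)|<a<1$ and $N_\vf(w_n)\ge c(1-|w_n|)$, and contradict $\sigma_{\alpha_0}=0$ via Fatou's lemma; the paper channels Fatou through $\|C_\vf\tilde k_{w_n}\|^2$ (bounded below by $cN_\vf(w_n)/(1-|w_n|^2)$ using the sub-mean-value property of $N_\vf$), while you channel it through the Littlewood--Jensen formula. One correction there: your displayed formula for $N_\vf(w)$ is an identity only when $b_w\circ\vf$ has no singular inner factor (which holds off a set of capacity zero, by Frostman's theorem); in general one has only $N_\vf(w)\le-\log|w|-\int_\TT\log|b_w(\vf(\xi))|^{-1}\,dm(\xi)$, but since you use it solely as an upper bound on $N_\vf$, this is harmless.

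The implication (C) $\Rightarrow$ (S) is where your proposal has a genuine gap --- in fact two, and they interact. First, the ``reverse bridge'' $\liminf_{w\to\alpha}N_\vf(w)/(1-|w|)\ge\|\sigma_\alpha\|$ along nontangential approach is never proved; you acknowledge it as the main obstacle and only sketch a hope (uniform integrability plus Julia--Carath\'eodory). Since your entire argument for this direction is $\neg(\mathrm S)\Rightarrow\neg(\mathrm N)\Rightarrow\neg(\mathrm C)$, and the first arrow \emph{is} the reverse bridge plus a localization step, the implication is not established at all. Second, that localization step --- placing the points where $N_\vf$ is large inside $\{|\vt|<a\}$ --- rests on the further unproved claim that for one-component $\vt$ the sublevel set contains a nontangential approach region at each $\alpha\in\Sigma(\vt)$; the citable spectral description (the paper's Lemma A, from Vol'berg--Treil'/Aleksandrov) gives only $\liminf_{r\to1^-}|\vt(r\alpha)|<1$, i.e. radial accessibility along a sequence, which is strictly weaker. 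This matching is not a technicality: the paper's Section 4 example (a Blaschke product with zeros tending to $1$ tangentially, $\Sigma(\vt)=\{1\}$, $C_\vf$ compact yet $\sigma_1\neq0$) shows that $\neg(\mathrm S)$ does \emph{not} imply $\neg(\mathrm N)$ for general inner $\vt$, so any proof along your lines must invoke one-componentness exactly at this point in a quantitative way; it cannot be waved through. The paper's own proof of (C) $\Rightarrow$ (S) sidesteps the reverse bridge entirely: for $\alpha\in\Sigma(\vt)$ it chooses radii with $|\vt(r_n\alpha)|<a$ (Lemma A), applies compactness to the weakly null normalized kernels $\tilde{\kappa}_{r_n\alpha}$ (Lemma 1(i)) to get $\|C_\vf\tilde{\kappa}_{r_n\alpha}\|\to0$, notes that $|\vt(r_n\alpha)|<a$ makes this dominate $\|C_\vf\tilde k_{r_n\alpha}\|$, and then computes directly that $\|C_\vf\tilde k_{r_n\alpha}\|^2\to1-\|h_\alpha\|_1=\|\sigma_\alpha\|$, forcing $\sigma_\alpha=0$. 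If you wish to salvage your route, an unrestricted (non-nontangential) form of the reverse bridge can be extracted from Stanton's formula applied to $\tilde k_{r\alpha}$ by splitting the area integral near and away from $\alpha$; but the localization into $\{|\vt|<a\}$ would still remain, and that is precisely the difficulty the paper's kernel argument is designed to avoid.
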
 

The proof mainly follows the  pattern as described in \cite{Saks}  section 7, see \cite{Sar} for the original approach and also\cite{CM}. 
We need the following description of the spectrum of a one-component inner function.

 \begin{lfact} 
 Let $\vt$ be a one-component inner function  and $\alpha \in \TT$. The following statements are equivalent \\ 
(a)  $\alpha \in \Sigma(\vt)$; (b) $\liminf_{w\rightarrow\alpha} |\vt(w)| <1$;  (c)  $\liminf_{r\rightarrow 1-} |\vt(r\alpha)| <1$.
\end{lfact}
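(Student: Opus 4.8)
The plan is to prove the cycle (a) $\Rightarrow$ (c) $\Rightarrow$ (b) $\Rightarrow$ (a), so that the only place where one-componentness is needed is the implication (a) $\Rightarrow$ (c). Two of the arrows are cheap. The implication (c) $\Rightarrow$ (b) is immediate, since the radial approach $w=r\alpha$ is a special case of the unrestricted approach $w\to\alpha$, whence $\liminf_{w\to\alpha}|\vt(w)|\le\liminf_{r\to1^-}|\vt(r\alpha)|$, and the right-hand side is $<1$ by (c). For (b) $\Rightarrow$ (a) I would argue by contraposition, using the fact recalled in the introduction (see \cite{Nik}) that $\vt$ continues analytically across $\TT\setminus\Sigma(\vt)$: if $\alpha\notin\Sigma(\vt)$ then $\vt$ extends holomorphically to a neighborhood of $\alpha$ with unimodular boundary values, so $|\vt(w)|\to1$ as $w\to\alpha$ and (b) fails. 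Neither of these steps uses one-componentness.

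The substance is (a) $\Rightarrow$ (c), which I split into two parts. First, for an arbitrary inner function I would show that $\alpha\in\Sigma(\vt)$ forces a stronger form of (b): the existence of a sequence $w_n\to\alpha$ with $|\vt(w_n)|\to0$. If $\alpha\in\overline{\Lambda}\cap\TT$ this is clear, since the zeros of $\vt$ accumulate at $\alpha$. If $\alpha\in\operatorname{supp}(\omega)$ I would use that $\omega$ is singular: the set of points with infinite symmetric derivative $D\omega=+\infty$ has full $\omega$-measure, hence is dense in $\operatorname{supp}(\omega)$, and at such a point $\beta$ one has $|\vt(r\beta)|\to0$ radially, via the identity $|S_\omega(z)|=\exp(-\int_\TT P_z\,d\omega)$ for the singular factor. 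Choosing $\beta$ near $\alpha$ and $r$ near $1$ then produces the required $w_n$.

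Second --- and this is the step requiring $\vt$ to be one-component, which I expect to be the main obstacle --- I would upgrade the sequence $\{w_n\}$, which a priori may approach $\alpha$ tangentially, to radial information. The mechanism is the Schwarz--Pick inequality \eqref{wellknown}, which says that $\vt$ does not increase the hyperbolic distance $\rho$ on $\DD$. If $\{w_n\}$ approached $\alpha$ \emph{nontangentially}, then the radial points $|w_n|\alpha$ would satisfy $\rho(w_n,|w_n|\alpha)\le M$ for a fixed $M$, hence $\rho(0,\vt(|w_n|\alpha))\le\rho(0,\vt(w_n))+M$ would stay bounded and $|\vt(|w_n|\alpha)|$ would be bounded away from $1$, yielding (c). Everything thus reduces to replacing a tangential sequence by a nontangential one. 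Here I would invoke one-componentness: since $|\vt(w_n)|\to0$, all the $w_n$ lie in a single connected open sublevel set $\Omega_r=\{|\vt|<r\}$, and I would argue that a connected sublevel set clustering at the boundary point $\alpha$ must do so within a Stolz angle, so that $\Omega_r$ contains points approaching $\alpha$ nontangentially. The heart of this claim is the geometry of the level curves $\{|\vt|=r\}$ of a one-component inner function, which land at points of $\Sigma(\vt)$ nontangentially; this is of a piece with the boundedness of $|\vt|$ on whole hyperbolic disks recorded in \eqref{nice} and exploited in Lemma \ref{l1}(ii), and the precise geometric input can be extracted from the structure theory of one-component inner functions in \cite{C,A}. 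Establishing this tangential-to-radial transfer rigorously is the crux of the argument; the remaining estimates are routine applications of \eqref{wellknown} and of the hyperbolic contraction property.
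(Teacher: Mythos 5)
Your overall architecture coincides with the paper's: the cycle is split so that one-componentness enters only in (a) $\Rightarrow$ (c), the easy implications hold for arbitrary inner functions, and the hard implication is sourced from the structure theory of one-component inner functions. In fact the paper gives no argument beyond exactly this: it cites \cite{Nik}, Lecture 3 for the elementary implications and \cite{VT}, Section 5 (see also Theorem 1.11 of \cite{A}) for (a) $\Rightarrow$ (c). Your proofs of (c) $\Rightarrow$ (b) and (b) $\Rightarrow$ (a) are correct, and your first step toward (a) $\Rightarrow$ (c) --- producing $w_n\to\alpha$ with $|\vt(w_n)|\to 0$, using accumulation of zeros when $\alpha\in\overline{\Lambda}$ and points of infinite symmetric derivative of $\omega$ (where the singular factor tends to $0$ radially, hence $|\vt|\le|S_\omega|\to 0$) when $\alpha\in{\rm supp}(\omega)$ --- is a correct strengthening of (b) valid for any inner function. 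The Schwarz--Pick transfer from a nontangential sequence to the radius via \eqref{wellknown} is also sound.

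The gap is your final claim, on which everything hangs: that a \emph{connected} sublevel set $\{|\vt|<r\}$ clustering at $\alpha$ must cluster nontangentially. Connectivity alone cannot deliver this: an open connected subset of $\DD$ can perfectly well accumulate at a boundary point purely tangentially (a thin tube squeezed between two internally tangent circles at $\alpha$ is connected), and nothing in your argument invokes more than connectivity plus Schwarz--Pick. So the statement you propose to ``extract from the structure theory in \cite{C,A}'' is not a routine geometric observation; it is essentially equivalent to the implication being proved, and it is precisely the content of the results the paper cites (\cite{VT}, Section 5; \cite{A}, Theorem 1.11). In other words, your reduction does not actually reduce the difficulty: modulo the elementary steps, your proof is the same citation the paper makes, wrapped in correct but inessential scaffolding. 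A self-contained argument would require the quantitative estimates for one-component inner functions near the spectrum (e.g.\ Aleksandrov's two-sided bounds governing how $|\vt|$ behaves on hyperbolic neighborhoods of the sublevel set), and supplying those is the real work; as written, that work is neither done nor pinned to a precise statement in the literature.
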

 
The implications $(c)\Rightarrow (a)\Rightarrow (b)\Rightarrow (a)$ are straightforward and hold for any inner function, see \cite{Nik}, Lecture 3; $(a)\Rightarrow (c)$  is true when $\vt$ is one-component, it follows from   \cite{VT}, Section 5, see also Theorem 1.11 in \cite{A}.

\begin{proof}[Proof (C) $\Rightarrow$ (S)]  Fix $\alpha \in \Sigma(\vt)$ and chose a sequence $r_n\to 1 $ so that 
 $|\vt(\alpha r_n)|<a<1$. By Lemma \ref{l1}, we have
 \[
% \label{norm}
 \left \|  C_\vf \tilde{\kappa}_{\alpha r_n}\right \|^2 \ge
     \int_\TT \frac{1-r_n^2}{|1-\bar{\alpha} r_n \vf(\xi)|^2}
            \frac {|1-\bar{\vt}(\alpha r_n) \vt(\vf(\xi))|^2}{1- |\vt(\alpha r_n)|^2} |d\xi| \ \to \ 0, \ {\rm as} \ n\to \infty.
 \]
Since $|\vt(\alpha r_n)|<a<1$, this  yields
  \[
 \|  C_\vf \tilde{k}_{\alpha r_n} \|^2 = 
          \int_\TT \frac {1-r_n^2}{|1-\bar{\alpha}r_n \vf(\xi)|^2}|d\xi| \to 0, \ {\rm as} \ n\to \infty,
 \]
 where $\tilde{k}_w$ is the normalized reproducing kernel for $H^2$, see (\ref{kernel}).

 The rest of the proof follows literally \cite{CM}, see also \cite{Saks}, Lemma 7.6. We give it here for
 the sake of completeness. We have
 \[
\|C_\vf\tilde{k}_{\alpha r_n} \|^2=
\int_{\TT} \frac{1-|r_n\vf(\xi)|^2}{|\alpha-r_n\vf(\xi)|^2}|d\xi| - \int_{\TT}r_n^2
          \frac{1-|\vf(\xi)|^2}{|\alpha-r_n\vf(\xi)|^2}|d\xi| =: A_n-B_n.
  \]
 Clearly,
\[A_n=\Re\left(\frac{\alpha+r_n\vf(0)}{\alpha-r_n\vf(0)}\right)=1.\] 
Further, by the monotone convergence theorem  
  \[
  \lim_{n\rightarrow \infty}B_n =\int_\TT  \frac{1-|\vf|^2}{|\alpha-\vf|^2}=\|h_\alpha\|_1, \  {\rm as} \ n\to \infty.
  \]
Then $\|\sigma_\alpha\|=1-\|h_\alpha\|_1=0$. This completes the proof (C) $\Rightarrow$ (S).  
\end{proof}
  
We remark that  the one-component condition was employed only in the description of the spectrum, so the following statement holds for any inner function:
{\em If $C_\vf:K_\vt\rightarrow H^2$ is a compact operator then $\sigma_\alpha=0$ for all $\alpha\in\TT$ such that $\liminf_{r\rightarrow 1-}|\vt(r\alpha)|<1$.}  

\begin{proof}[ Proof (S) $\Rightarrow$ (N)] We will prove this implication and refer to Theorem \ref{th:1}.  Suppose that (N) is false, then 
\[
N_\vf(w_n) \frac{1-|\vt(w_n)|^2}{1-|w_n|^2} > c>0, \ {\rm for \ some} \  \{w_n\}\subset \DD, w_n\rightarrow\alpha\in\TT.
\]  
Clearly $\alpha\in\Sigma(\vt)$ and  \eqref{thetaestimate}  holds for some $a<1$. Further, \[(1-|w_n|)^{-1}N_\vf(w_n)>c_1>0.\] 
We  obtain a contradiction in the same way as in \cite{CM} see also \cite{Saks}, Theorem 7.5.   We have, by a simple version of (\ref{finalestimate})
  \[
\|C_\vf\tilde{k}_{w_n}\|^2\ge C\frac{N_\vf(w_n)}{1-|w_n|^2}>c_2>0.\]
On the other hand by the Fatou lemma, 
\begin{multline*}
\limsup_{n\rightarrow\infty}\|C_\vf\tilde{k}_{w_n}\|^2=1 - \liminf_{n\rightarrow\infty}\int_\TT|w_n|^2 \frac{1-|\vf(\xi)|^2}{|1-\bar{w}_n\vf(\xi)|^2} |d\xi|\le\\
1 -\int_\TT\frac{1-|\vf(\xi)|^2}{|\alpha-\vf(\xi)|^2}|d\xi|=\|\sigma_\alpha\|=0,
 \end{multline*}
 which leads to a contradiction. 
\end{proof}

%%%%%%%%%%%%%%%%%%%%%%%%%%

\section{Examples and concluding remarks}

 {\em Inner functions with one point spectra.} Consider the Paley-Wiener space $K_{\vt_1}$ generated by 
 \[
 \vt_1(z)=e^{\frac{z+1}{z-1}},
 \]
this space can be obtained from the classical Paley-Wiener space of entire functions by the substitution 
$\zeta \mapsto \frac{z-i}{z+i}$. Then $\vt_1$ is a one-component inner function and $\Sigma(\vt_1)=\{1\}$.  
Theorem \ref{th:1} and explicit calculation show that $C_\vf:K_{\vt_1}\to H^2$ is a compact operator if and only if
\beq
\label{PW}
\frac{N_\vf(w)}{\max  \left ((1-|w|^2),|1-w|^2 \right )} \ \to \ 0, \ \ {\rm as}  \ \ w\to 1.
\eeq

Consider  now $D=\{w\in \DD; |w-1/4|<3/4\}$  and let $\vf$  be  a conformal mapping $\vf  : \DD  \to D $, $\phi(0)=0, \vf(1)=1$. 
Clearly  (\ref{PW}) does not hold and the operator   $C_\vf:K_{\vt_1}\to H^2$ is not compact. Evidently,
the Aleksandrov-Clark measure $\mu_1$ of $\vf$ is not absolutely continuous.   Below we give an example of  a (multi-component) inner function 
$\vt$  with $\Sigma(\vt)=\{1\}$ and such that $C_\vf :  K_\vt \to H^2$ is a compact operator.  Thus (C) does not imply (S) for general $\vt$. 

Take a sequence $t_m\searrow  0$ such that   $\{\zeta_m\}=\{(1-t_m^3) ^{1/2}e^{it_m}\}$ is 
  an interpolating sequence in $\DD$. Given a sequence $\{\alpha_m\}\in l^1,\ \alpha_m\in(0,1),$ denote  
  $\Lambda=\{\lambda_m\}=\{(1-\alpha_mt_m^3)^{1/2}e^{it_m}\}$, this is also an interpolating sequence. Let now 
  $\vt=B_\Lambda$ be the Blaschke product corresponding to the sequence $\Lambda$. We claim that 
  $C_\vf:K_\vt \to H^2$  is a compact operator.
 
  Indeed, $\|C_\vf \tilde{k}_\zeta\|\leq 1$, $\zeta \in \DD$, here $\tilde{k}_\zeta$ is the normalized reproducing kernel for $H^2$, 
  this follows   just from the fact that  $C_\vf$ is contractive.  In particular
  \[
  %\label{eq1}
   t_m^3 \int_\TT \frac {|d\xi|}{|1-\bar{\zeta}_m \vf(\xi)|^2}=\|C_\vf \tilde{k}_{\zeta_m}\|^2 \le 1.
  \]
  Since $|1-\bar{\zeta}_m \vf(\xi)|^2\le c |1-\bar{\lambda}_m \vf(\xi)|^2$, $\xi \in \TT$, we have,
  \[
  %\label{eq2}
  \|C_\vf \tilde{k}_{\lambda_m}\|^2 \asymp \alpha_m t_m^3 \int_\TT \frac {|d\xi|}{|1-\bar{\lambda}_m \vf(\xi)|^2} 
  \le C\alpha_m.
 \]
 On the other hand the system $\{\tilde{k}_{\lambda_m}\}$ forms a Riesz basis in $K_\vt$ (see e.g. \cite{Nik}, Lecture VII).
 Compactness of $C_\vf:K_\vt \to H^2$ is now straightforward, alternatively it could be deduced from Theorem 1.

\medskip

{\em Concluding remarks.} 
In the classical case of $H^2(\DD)$  the essential norm of the composition operator was obtained by J.~Shapiro
\[
\|C_\vf\|_e^2=\limsup_{|w|\rightarrow 1-}\frac{N_\vf(w)}{-\log|w|}.
\]
For a given one-component $\vt$ the equivalence of the norms proved in \cite{C2} and similar arguments give 
\[
\|C_\vf : K_\vt\rightarrow H^2\|_e^2\asymp \limsup_{|w|\rightarrow 1-}N_\vf(w)\frac{1-|\vt(w)|^2}{1-|w|^2}.
\]

Let $\vf:\DD\rightarrow\DD$ be a holomorphic function and 
$\vf^*$ be its radial boundary values . Define a measure $\nu_\vf$ on $\bar{\DD}$ 
by $\nu_\vf(E)=m((\vf^*)^{-1}(E))$ for any $E\subset \bar{\DD}$, where $m$ is 
the Lebesgue measure on $\TT$. The composition operator $C_\vf$ on $H^2(\DD)$  is isometrically equivalent to the embedding of $H^2$ into 
$L^2(\bar{\DD},\nu_\vf)$, see \cite{Mc, CMc} for details. The connecting between the Nevanlinna counting function and the measure $\nu_\vf$ was recently 
studied in details in \cite{LQLR}.

Respectively, the compactness of the composition operator on $K_\vt$ can be reduced 
to the question of the compactness of the embedding $K_\vt\hookrightarrow L^2(\bar{\DD},\nu_\vf)$. 
It is well-known that the embeddings are easier to study for one-component inner functions $\vt$, see
 \cite{C,C2}, and  subsequent works \cite{ VT} and \cite{A,A1}. 
 The  compactness of the embedding $K_\vt\hookrightarrow L^2(\bar{\DD},\mu)$  was studied by J.~A.~Cima and A.~L.~Matheson \cite{CM1} and by A.~D.~Baranov  \cite{B}. The latter article contains in particular 
 necessary and sufficient conditions for the compactness of the embedding 
 for the case of one-component inner function.  This approach also shows that for one-component 
   $\vt$ the compactness of the composition operator $C_\vf:K_\vt^ p\rightarrow H^p$ 
   does not depend on $p\in(1,\infty)$.

\end{document}